\newtheorem{thm}{Theorem}
\newtheorem{lem}[thm]{Lemma}
\newtheorem{conj}{Conjecture}
\newtheorem{cor}[thm]{Corollary}
\newtheorem{prop}[thm]{Proposition}
\newtheorem{ques}{Question}
\newtheorem{prob}{Problem}
\newcommand{\cp}{\,\square\,}
\newcommand{\LSs}[2]{^{#1}{#2}}
\begin{document}

\title{On independent domination in direct products}

\author{$^1$Kirsti Kuenzel and $^2$Douglas F. Rall \\
\\
$^1$Department of Mathematics \\
Trinity College \\
Hartford, Connecticut  USA \\
\small \tt Email: kwashmath@gmail.com\\
\\
$^2$Department of Mathematics \\
Furman University \\
Greenville, SC, USA\\
\small \tt Email: doug.rall@furman.edu}

\date{\today}

\maketitle

\begin{abstract}
In \cite{nr-1996} Nowakowski and Rall listed a series of conjectures involving several different graph products. In particular, they conjectured that $i(G\times H) \ge i(G)i(H)$ where $i(G)$ is the independent domination number of $G$ and $G\times H$ is the direct product of graphs $G$ and $H$. We show this conjecture is false, and, in fact, construct pairs of graphs for which $\min\{i(G), i(H)\} - i(G\times H)$ is arbitrarily large. We also give the exact value of $i(G\times K_n)$ when $G$ is either a path or a cycle.
\end{abstract}
\medskip
\noindent {\bf Key words:} direct product of graphs; independent domination

\medskip
\noindent{\bf AMS Subj.\ Class:} 05C69; 05C76

\section{Introduction} \label{sec:introduction}

Independence in graph products has been studied by many authors but almost always in the context of the independence number, commonly denoted by $\alpha$.
We mention just samples  of papers concerning the independence number of a Cartesian product $\alpha(G \cp H)$ (see~\cite{f-2011, hk-1996, js-1994, k-2005, nr-1996})
and  of a direct product $\alpha(G \times H)$ (see~\cite{jk-1998, kr-2022, nr-1996}).  In addition, for both of these two products some investigation has also been done
on the so-called ultimate independence ratios,  $\lim_{m \to \infty}\frac{\alpha(\cp_{i=1}^m G)}{n(G)^m}$ and $\lim_{m \to \infty}\frac{\alpha(\times_{i=1}^m G)}{n(G)^m}$.
See for example~\cite{al-2007, bnr-1996, hyz-1994, t-2014}.

Nowakowski and Rall~\cite{nr-1996} studied the behavior of a number of domination, independence and coloring type invariants on nine associative graph products whose edge sets depend on the edge sets of both factors.  In particular, they proved some lower and upper bounds for the
cardinality of a smallest maximal independent set, the independent domination number, of these products.  For an excellent survey of independent domination
see the paper~\cite{gh-2013} by Goddard and Henning.
In this work we will focus on the independent domination number of the direct product of two graphs.  In particular, we are interested
in how the independent domination number of a direct product relates to the independent domination numbers of the two factors.
In the process we give a counterexample to the following conjecture of Nowakowski and Rall.
\begin{conj} {\rm \cite[Section 2.4]{nr-1996}}  \label{conj:lowerbound}
For all graphs $G$ and $H$, $i(G \times H) \ge i(G)i(H)$.
\end{conj}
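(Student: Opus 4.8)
Since the abstract already announces that Conjecture~\ref{conj:lowerbound} is \emph{false}, the plan is not to prove it but to refute it, and in fact to establish the stronger assertion that $\min\{i(G),i(H)\}-i(G\times H)$ can be made arbitrarily large. The first thing I would do is pin down the structural obstruction that any counterexample must overcome. For all graphs one has the degree bound
\[
i(G\times H)\ \ge\ \gamma(G\times H)\ \ge\ \frac{n(G)\,n(H)}{\Delta(G)\,\Delta(H)+1},
\]
so to get below $i(G)i(H)$ the two factors must be \emph{dense} (large maximum degree) and yet have large independent domination number. This rules out sparse candidates such as cycles or the Petersen graph, and points directly at joins: if $G=A+B$ then every independent set lies in a single side, so $i(A+B)=\min\{i(A),i(B)\}$ can be large while $\gamma(A+B)\le 2$; thus $n/\Delta\approx 1$ and the degree bound imposes no obstruction at all.

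The argument then splits into an easy half and a delicate half. The easy half is the lower bound on the factors: presenting $G$ and $H$ as joins $A+B$ and $C+D$ with $i(A),i(B),i(C),i(D)$ all large, the identity $i(A+B)=\min\{i(A),i(B)\}$ makes $\min\{i(G),i(H)\}$ as large as desired. The delicate half is the upper bound, namely exhibiting an explicit small maximal independent set $D$ in $G\times H$. I would build $D$ coordinatewise, assigning to each $g\in V(G)$ a set $A_g\subseteq V(H)$ and setting $D=\{(g,h):h\in A_g\}$: here $D$ is independent exactly when, for every edge $g\sim g'$ of $G$, there is no edge of $H$ joining $A_g$ to $A_{g'}$, and $D$ dominates exactly when every $(g,h)\notin D$ has some $g'\in N_G(g)$ with $A_{g'}\cap N_H(h)\ne\emptyset$. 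The objective is to force $\sum_g|A_g|$ to be far smaller than $\min\{i(G),i(H)\}$, after which maximality of $D$ is automatic, since a dominating independent set is maximal.

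The step I expect to be the main obstacle is precisely this construction of $D$, because domination in a direct product behaves like \emph{total} domination, which is in direct tension with independence. Naive ``rectangular'' choices provably fail: a single block $A_0\times B_0$ with $A_0$ independent never dominates the vertices of $A_0\times V(H)$ lying outside the block, and a two-block set $(\{t_1\}\times C_1)\cup(\{t_2\}\times C_2)$ with $t_1\sim t_2$ typically forces both $C_1$ and $C_2$ to be total dominating sets of $H$, which cannot avoid edges between them and hence cannot be independent. Overcoming this is the whole game: I would search for a genuinely non-product independent set that uses the density of the join to dominate the four quadrant slabs of $(A+B)\times(C+D)$ simultaneously, while the mismatched second coordinates break every would-be edge. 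Once a single such pair is found, I would amplify the gap—by enlarging the large-$i$ sides $A,B,C,D$ or by iterating the join—so that $\min\{i(G),i(H)\}-i(G\times H)\to\infty$, and finally verify carefully both the independence/domination of $D$ and the exact values of $i(G)$ and $i(H)$.
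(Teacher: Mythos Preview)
Your plan correctly diagnoses that the conjecture is to be refuted and that the factors must be dense with large $i$, but it stops exactly where the content begins: you never exhibit the small independent dominating set of $G\times H$, and you say so yourself (``I would search for\ldots'', ``once a single such pair is found''). For this statement the proof \emph{is} the explicit example, so a proposal that defers that step is not yet a proof. Moreover, the specific template you propose---both factors as two-part joins $A+B$ and $C+D$---fails in its natural first instance: with $A,B,C,D$ independent of size $m$ one gets $G=H=K_{m,m}$, the product $K_{m,m}\times K_{m,m}$ is the disjoint union of two copies of $K_{m^2,m^2}$, and hence $i(G\times H)=2m^2>m^2=i(G)i(H)$. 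So the join ansatz does not by itself produce a counterexample, and you offer no evidence that some refinement will.

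The paper's construction is concrete and of a different shape. Only one factor, $H_r=K_{2,2,\dots,2}$, is an (iterated) join; the other, $X_m$, is not a join at all but consists of four pendant independent $m$-sets $A,B,C,D$ together with four hub vertices $x_1,x_2,x_3,x_4$ carrying just the two edges $x_1x_2$ and $x_3x_4$, each hub attached to exactly two of the $m$-sets so that every vertex of $A\cup B\cup C\cup D$ has a neighbour in $\{x_1,x_2\}$ and a neighbour in $\{x_3,x_4\}$. The idea your plan is missing is the explicit $8$-element set
\[
I=\bigl(\{x_1,x_2\}\times\{u_1,v_1\}\bigr)\cup\bigl(\{x_3,x_4\}\times\{u_2,v_2\}\bigr),
\]
where $\{u_i,v_i\}$ are two partite classes of $H_r$: independence holds because the $G$-edges $x_1x_2,x_3x_4$ are paired with the $H$-nonedges $u_1v_1,u_2v_2$, and domination is a short direct check. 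This gives $i(X_m\times H_r)\le 8<2(m+2)=i(X_m)i(H_r)$ with no search; the stronger Theorem~\ref{thm:extremecounterexample} then requires a second, more intricate explicit pair $G_n,H_n$ with a $12$-element independent dominating set.
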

In fact, we prove a stronger result; namely
\begin{thm} \label{thm:extremecounterexample}
For any positive integer $n$ such that $n>10$, there exists a pair of graphs $G$ and $H$ such that
$\min\{i(G),i(H)\}=n+2$ and $i(G \times H) \leq 12$.
\end{thm}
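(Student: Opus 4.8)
The plan is to produce, for each large $n$, an explicit pair $G,H$ together with an explicit maximal independent set of $G\times H$ of size at most $12$. The guiding insight is a necessary condition: if $S$ is any dominating set of $G\times H$, then its projection $p_G(S)=\{g:(g,h)\in S \text{ for some }h\}$ is a dominating set of $G$, so $\gamma(G)\le i(G\times H)$, and likewise $\gamma(H)\le i(G\times H)$. Hence both factors must have domination number at most $12$ while having independent domination number $n+2$; I therefore need graphs exhibiting a large gap between $\gamma$ and $i$.

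First I would construct the gadget, where the ``$+2$'' in $n+2$ suggests a graph built from two ``hub edges''. Concretely, take four hub vertices $\alpha_1,\alpha_2,\beta_1,\beta_2$ with only the edges $\alpha_1\alpha_2$ and $\beta_1\beta_2$, and four independent blocks $Z_{11},Z_{12},Z_{21},Z_{22}$, each of size $n$, where every vertex of $Z_{ij}$ is joined exactly to $\alpha_i$ and $\beta_j$. A maximal independent set must avoid both endpoints of a hub edge, so it contains at most one of $\alpha_1,\alpha_2$ and one of $\beta_1,\beta_2$; whichever block is ``missed'' by the chosen hubs must then be taken in full, forcing $i=n+2$, while $\{\alpha_1,\alpha_2,\beta_1,\beta_2\}$ is a total dominating set, so $\gamma\le 4$. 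Crucially this gadget has minimum degree $2$, avoiding the pendant vertices that make the direct product rigid: a leaf in both factors yields a degree-one product vertex whose unique dominator is forced into $S$, which would blow up $i(G\times H)$.

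Next I would exhibit the independent dominating set of the product. Writing any candidate $S$ through its rows $Q_a=\{h:(a,h)\in S\}$ indexed by $a\in D:=p_G(S)$, domination of $(g,h)$ for every $h$ forces $\bigcup_{a\in N(g)\cap D}Q_a$ to totally dominate the second factor for each $g$, while independence forces $Q_a$ and $Q_{a'}$ to have no edge between them in $H$ whenever $a\sim a'$ in $D$. The construction must therefore arrange that for each $g$ the relevant rows jointly totally dominate $H$ yet pairwise stay ``cross-independent''. This is exactly where the block vertices help: having two neighbours in complementary hub classes, such a vertex can be dominated through either class, so a union of two cross-independent rows suffices.

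The hard part, and the step I expect to be the main obstacle, is handling the hub vertices themselves while simultaneously keeping \emph{both} factors' independent domination numbers large. A hub such as $\alpha_1$ has essentially a single neighbour inside $D$, so one opposite row is forced to totally dominate almost all of $H$ while remaining cross-independent with $\alpha_1$'s own row, and these two demands collide in any perfectly balanced gadget; moreover, naively guaranteeing domination of the block vertices tends to make a small \emph{independent} set dominate $G$, collapsing $i(G)$ to a constant. Resolving this tension is what I expect to dictate the precise gadget and the value $12$: I anticipate needing to enrich the hub adjacencies, or to use two slightly different but compatible gadgets for $G$ and $H$, so that every vertex, hubs included, has two neighbours in complementary classes. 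One can then take a symmetric family of at most twelve ``hub$\times$hub'' cells and verify directly that it is independent and dominates every product vertex, while the blow-up structure keeps $i(G)=i(H)=n+2$. Finally I would check maximality of $S$ and confirm $\min\{i(G),i(H)\}=n+2$, completing the proof.
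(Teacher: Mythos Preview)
Your high-level strategy matches the paper's, and your initial four-hub gadget is precisely the graph $X_m$ the paper uses for its first, weaker counterexample. You also correctly diagnose the obstruction: with only four hubs, a hub vertex has a single hub neighbour inside $D$, so one row would have to totally dominate $H$ while being cross-independent from another nontrivial row, and this collides with $i(H)$ being large.

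The genuine gap is that you never resolve this obstruction. You write that you ``anticipate needing to enrich the hub adjacencies, or to use two slightly different but compatible gadgets,'' and that ``one can then take a symmetric family of at most twelve hub$\times$hub cells and verify directly''---but you supply neither the enriched gadgets nor the twelve cells, and their existence is not obvious from general principles. This is the entire content of the proof. The paper's construction is far from generic: it uses \emph{six} hub classes in each factor (twelve hub vertices $u_s,v_s$ in $G_n$, six vertices $y_s$ in $H_n$), a specific graph on $\{y_1,\dots,y_6\}$, extra clique edges among the $A_s$ in $G_n$, and two hand-picked hypergraphs $\mathcal{A},\mathcal{B}$ of index sets governing the pendant blocks. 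These are tuned simultaneously so that (i) every maximal independent set of hubs misses some block, forcing $i=n+2$ in each factor; (ii) for every $J\in\mathcal{A}$ the set $\{y_j:j\in J\}$ totally dominates $H_n$, and for every $K\in\mathcal{B}$ the set $\bigcup_{k\in K}A_k$ totally dominates $G_n$, so the block layers are dominated; and (iii) $A_j\cup A_k$ is a clique in $G_n$ only when $y_jy_k\notin E(H_n)$, making the diagonal set $D=\bigcup_{s=1}^{6}\{(u_s,y_s),(v_s,y_s)\}$ independent. Your four-hub gadget satisfies none of these simultaneously, and your proposal contains no argument that a suitable enrichment exists; the plan stops exactly where the work begins.
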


The organization of the paper is as follows.  In the next section we provide necessary definitions and several previous results.
In Section~\ref{sec:productwithcomplete} we restrict our attention to direct products in which one of the factors is a complete
graph, and introduce a method for calculating the independent domination number of $G \times K_n$ in terms of minimizing
a certain kind of labelling of $V(G)$.  Using this scheme we find the values of $i(P_m \times K_n)$ and $i(C_m \times K_n)$.
Lower bounds for $i(G \times H)$, in terms of other domination-type invariants of $G$ and $H$, are given in Section~\ref{sec:lowerbounds}.
The main result of the paper is in Section~\ref{sec:counterexamples} where we give an infinite collection of counterexamples to
Conjecture~\ref{conj:lowerbound} and prove Theorem~\ref{thm:extremecounterexample}.

\section{Definitions and preliminary results} \label{sec:defns}

We denote the order of a finite graph $G=(V(G),E(G))$ by $n(G)$.  For a positive integer $n$ we let $[n]=\{1,\ldots,n\}$; the vertex set of the complete graph $K_n$
will be $[n]$ throughout.  A subset $D\subseteq V(G)$ \emph{dominates} a subset $S \subseteq V(G)$ if $S \subseteq N[D]$.  If $D$ dominates~$V(G)$, then we will also say that $D$ dominates the graph $G$ and that $D$ is a \emph{dominating set} of $G$.  If $D$, in addition to being a dominating set
of $G$, has the property that every vertex in $D$ is adjacent to at least one other vertex of $D$, then $D$ is a \emph{total dominating set} of $G$. The
\emph{total domination number} of $G$ is the minimum cardinality among all total dominating sets of $G$; it is denoted $\gamma_t(G)$.  The \emph{$2$-packing number} of $G$, denoted $\rho(G)$, is the largest cardinality of a vertex subset $A$ such that the distance in $G$ between $a_1$ and $a_2$ is at least $3$ for every pair $a_1,a_2$
of distinct vertices in $A$.
A set $I \subseteq V(G)$ is an \emph{independent dominating} set if $I$ is simultaneously independent and dominating.  This is equivalent to $I$ being a maximal independent set with respect to set inclusion.  The \emph{independence number} of $G$ is the cardinality, $\alpha(G)$, of a largest independent set in $G$.  We denote by $i(G)$ the smallest cardinality of a maximal independent set in $G$; this invariant is called the \emph{independent domination number} of $G$.

The \emph{direct product}, $G\times H$, of graphs $G$ and $H$ is defined as follows:
\begin{itemize}
\item $V(G \times H)=V(G) \times V(H)$;
\item $E(G \times H)= \{ (g_1,h_1)(g_2,h_2) \, \colon\, g_1g_2 \in E(G) \,\,\text{and}\,\,h_1h_2 \in E(H) \}$
\end{itemize}
The direct product is both commutative and associative.   For a vertex $g$ of $G$, the \emph{$H$-layer over $g$} of $G\times H$ is the set $\{ \, (g,h) \mid h\in V(H) \,\}$, and it is denoted by $\LSs g H$.  Similarly, for $h \in V(H)$, the \emph{$G$-layer over $h$}, $G^h$, is the set $\{ \, (g,h) \mid g\in V(G) \,\}$.  Note that each $G$-layer and each $H$-layer is an independent set in $G\times H$.  The \emph{projection to $G$} is the map $p_G: V(G\times H) \to V(G)$ defined by $p_G(g,h)=g$.  Similarly, the \emph{projection to $H$} is the map $p_H: V(G\times H) \to V(H)$ defined by $p_H(g,h)=h$.  If $A \subseteq V(G\times H)$ and $g \in V(G)$, then we employ
$\LSs g A$ to denote $A \cap\, \LSs g H$.  Similarly, $A^h=A \cap G^h$ for a vertex $h$ of $H$.

The following result of Topp and Volkmann will be useful in establishing our main results.
\begin{lem} {\rm \cite[Proposition 11]{tv-1992}} \label{lem:inverseimage}
Let $H$ be a graph with no isolates.  If $I$ is a maximal independent set of any graph $G$, then $I \times V(H)$ is a maximal independent set of $G \times H$.
\end{lem}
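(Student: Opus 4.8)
The plan is to verify the two defining properties of a maximal independent set separately: first that $I \times V(H)$ is independent in $G \times H$, and then that it dominates $G \times H$. Both follow directly from the adjacency rule of the direct product, in which two vertices $(g_1,h_1)$ and $(g_2,h_2)$ are adjacent if and only if $g_1g_2 \in E(G)$ \emph{and} $h_1h_2 \in E(H)$.

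For independence, I would take two arbitrary vertices $(g_1,h_1),(g_2,h_2) \in I \times V(H)$, so that $g_1,g_2 \in I$. Since $I$ is independent in $G$, we have $g_1g_2 \notin E(G)$, and hence the direct-product adjacency condition fails in its first coordinate. Therefore no two vertices of $I \times V(H)$ are adjacent, and $I \times V(H)$ is independent. This step does not use any hypothesis on $H$.

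For domination (equivalently, maximality), I would take an arbitrary vertex $(g,h) \in V(G \times H)$ with $(g,h) \notin I \times V(H)$. Because $h \in V(H)$ automatically, the only way to have $(g,h) \notin I \times V(H)$ is that $g \notin I$. Since $I$ is a maximal, hence dominating, independent set of $G$, the vertex $g$ is dominated by $I$; as $g \notin I$, there is some $g' \in I$ with $gg' \in E(G)$. Here is where the hypothesis on $H$ enters: since $H$ has no isolated vertices, the vertex $h$ has a neighbor $h' \in V(H)$, so $hh' \in E(H)$. Then $(g',h') \in I \times V(H)$ and both coordinate conditions hold, so $(g,h)$ is adjacent to $(g',h')$. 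Thus every vertex outside $I \times V(H)$ has a neighbor inside it, and $I \times V(H)$ is dominating.

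Combining the two parts shows $I \times V(H)$ is a maximal independent set of $G \times H$. There is no serious obstacle here; the one point that requires care is seeing exactly why the assumption that $H$ has no isolates is indispensable. Unlike in the Cartesian product, adjacency in the direct product demands movement in \emph{both} coordinates simultaneously, so dominating $(g,h)$ from $I \times V(H)$ forces us to pair the neighbor $g' \in I$ with an actual neighbor $h'$ of $h$; if $h$ were isolated in $H$, no such $h'$ would exist and the set would fail to dominate. Making this dependence explicit is the main conceptual content of the argument.
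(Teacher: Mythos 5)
Your proof is correct. Note that the paper itself does not prove this lemma at all---it is quoted from Topp and Volkmann \cite[Proposition 11]{tv-1992}---so there is no in-paper argument to compare against; your direct verification (independence fails in the first coordinate since $I$ is independent in $G$; domination pairs a $G$-neighbor $g' \in I$ of $g$ with an $H$-neighbor $h'$ of $h$, which exists precisely because $H$ has no isolated vertices) is the standard argument and correctly uses the paper's stated equivalence between maximal independent sets and independent dominating sets.
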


\medskip
As an immediate consequence of Lemma~\ref{lem:inverseimage} we get a lower bound for $\alpha(G \times H)$, which is well-known, and an upper bound for $i(G \times H)$.
Both were established earlier by Nowakowski and Rall~\cite{nr-1996}.
\begin{cor} {\rm \cite[Table 3]{nr-1996}}            \label{cor:triviallower}
If both $G$ and $H$ have no isolated vertices, then
\begin{itemize}
\item $\alpha(G \times H) \ge \max \{\alpha(G) n(H), \alpha(H) n(G)\}$;
\item $i(G \times H) \le \min \{i(G) n(H), i(H) n(G)\}$.
\end{itemize}
\end{cor}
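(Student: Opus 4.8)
The plan is to derive both inequalities by exhibiting, for each factor in turn, an explicit maximal independent set of $G \times H$ built from an optimal set in that factor, and then to invoke the commutativity of the direct product to obtain the two symmetric terms appearing inside the $\max$ and the $\min$. The engine in each case is Lemma~\ref{lem:inverseimage}, which turns a maximal independent set of one factor into a maximal independent set of the product, provided the \emph{other} factor has no isolated vertices.

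For the lower bound on $\alpha$, I would first choose a maximum independent set $I$ of $G$, so that $|I| = \alpha(G)$. Since $H$ has no isolated vertices, Lemma~\ref{lem:inverseimage} guarantees that $I \times V(H)$ is a maximal, and in particular an independent, set of $G \times H$, of cardinality $|I|\,n(H) = \alpha(G)\,n(H)$. As every independent set has at most $\alpha(G\times H)$ vertices, this yields $\alpha(G \times H) \ge \alpha(G)\,n(H)$. Interchanging the roles of the two factors, which is legitimate because $G \times H$ is commutative and because $G$ also has no isolated vertices, a maximum independent set of $H$ gives $\alpha(G \times H) \ge \alpha(H)\,n(G)$. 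Taking the larger of the two quantities produces the stated bound.

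For the upper bound on $i$, I would run the identical argument with a minimum maximal independent set in place of a maximum independent one. Choosing a maximal independent set $I$ of $G$ with $|I| = i(G)$ and applying Lemma~\ref{lem:inverseimage} produces a maximal independent set $I \times V(H)$ of $G \times H$ of cardinality $i(G)\,n(H)$. Since $i(G \times H)$ is by definition the minimum cardinality among all maximal independent sets of $G \times H$, we obtain $i(G \times H) \le i(G)\,n(H)$, and symmetrically $i(G \times H) \le i(H)\,n(G)$; the minimum of these is the claimed bound.

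There is essentially no obstacle here: the substance is entirely contained in Lemma~\ref{lem:inverseimage}, and the only point requiring any care is that the lemma must be applied in both orientations, which is precisely why the hypothesis that \emph{both} $G$ and $H$ have no isolated vertices is imposed. The two bounds are genuinely parallel, differing only in whether one chooses the independent set in the factor to be as large as possible (for $\alpha$) or maximal yet as small as possible (for $i$).
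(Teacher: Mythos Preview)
Your argument is correct and is exactly the derivation the paper has in mind: the corollary is stated as an ``immediate consequence'' of Lemma~\ref{lem:inverseimage}, and your proof spells out precisely that consequence by taking $I$ to be, respectively, a maximum independent set or a minimum maximal independent set in one factor and then invoking commutativity. There is nothing to add or correct.
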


\section{Independent domination in $G \times K_n$} \label{sec:productwithcomplete}

In this section we focus on direct products in which one of the factors is a complete graph, and we will use notation introduced in our paper~\cite{kr-2022}.

Let $I$ be a maximal independent set of $G \times H$.  Suppose $g$ is a vertex of $G$ such that $\LSs{g}{I} \not=\emptyset$ but  $\LSs{g}{I} \not= {\LSs{g}{H}}$.  Let $(g,h)\in {\LSs{g}{H}- \,\LSs{g}{I}}$.  Since $I$ is a dominating set of $G \times H$, it follows that there exists $g' \in N_G(g)$ and $h'\in N_H(h)$ such that $(g',h') \in I$.  Note that  such a vertex $h'$ does not belong to $N_H(p_H(\LSs{g}{I}))$.  For if $h'x \in E(H)$ for some $(g,x) \in I$, then $(g',h')$ and $(g,x)$ are adjacent vertices of $I$, which
is a contradiction.  However, it is possible that $h' \in p_H(\LSs{g}{I})$.

Consider now the special case $G \times K_n$ for $n \ge 2$.  The following lemma is from~\cite{kr-2022}.  For the sake of completeness we give its short proof.

\begin{lem} {\rm \cite[Lemma 9]{kr-2022}}\label{lem:sizeoflayers}
Let $n \ge 2$ and let $G$ be any graph.  If $I$ is any maximal independent set of $G \times K_n$, then $\left| I \cap {\LSs{g}{K_n}} \right | \in \{0,1,n\}$, for any $g \in V(G)$.
\end{lem}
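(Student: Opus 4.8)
The only values that need to be excluded are $2 \le |\LSs{g}{I}| \le n-1$, since $0$ and $n$ are automatically permitted. The plan is to derive a contradiction from the tension between domination and independence. So I would assume $|\LSs{g}{I}| = k$ with $2 \le k \le n-1$ and set $S = p_{K_n}(\LSs{g}{I}) \subseteq [n]$; because the first coordinate is fixed, the projection is injective on the layer and $|S| = k \ge 2$.

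Since $k \le n-1$, I can choose $h_2 \in [n] \setminus S$, so that $(g,h_2)$ lies outside $I$. Maximality of $I$ forces $(g,h_2)$ to be dominated, and by the definition of the direct product any neighbor of $(g,h_2)$ in $I$ has the form $(g',h')$ with $g' \in N_G(g)$ and $h' \in N_{K_n}(h_2) = [n] \setminus \{h_2\}$. The key step is then to play this dominator against the inside vertices: for each $h_1 \in S$ the vertex $(g,h_1)$ belongs to $I$, and since $g g' \in E(G)$ the vertices $(g,h_1)$ and $(g',h')$ are adjacent exactly when $h_1 \ne h'$. Independence of $I$ therefore forces $h_1 = h'$, and as this holds for every $h_1 \in S$ we conclude $|S| = 1$, contradicting $k \ge 2$. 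Hence $|\LSs{g}{I}| \in \{0,1,n\}$.

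The argument is short, and I expect the only subtle point to be this final counting step rather than any technical estimate. The crux is that a single dominator $(g',h')$ of an outside vertex must simultaneously agree in its $K_n$-coordinate with every inside vertex of the layer; this is possible only when the layer contributes a single coordinate, which is precisely why $k=1$ survives. The case $k=n$ is never reached because no outside vertex $(g,h_2)$ exists, so no dominator is forced, and the case $k=0$ is vacuous (note that for $n=2$ the excluded range is empty, so nothing needs checking). The main thing to be careful about is keeping straight which coordinates are forced to coincide, but there is no genuine obstacle here.
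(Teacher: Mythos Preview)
Your argument is correct and is essentially the same as the paper's: both pick an uncovered vertex $(g,h_2)$ in the layer, find a dominator $(g',h')$ with $g'\in N_G(g)$, and observe that independence forces $h'$ to coincide with every element of $S$, which is impossible when $|S|\ge 2$. The paper phrases the last step by fixing two explicit elements of $S$ (namely $1$ and $2$) and noting $h'$ must differ from at least one of them, but this is the same contradiction you obtain.
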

\begin{proof} If $n=2$, then the conclusion is obvious.  Assume $n\ge 3$ and suppose for the sake of contradiction that $\left| I \cap {\LSs{g}{K_n}} \right |=m$ for some $2 \le m <n$.  Assume without loss of generality that $\{(g,1),(g,2)\} \subset I$.  Let $i \in [n]$ such that $(g,i) \not\in I$. As above, there exists $g' \in N_G(g)$ and $j \in N_{K_n}(i)$ such that $(g',j)\in I$.  Since $n \ge 3$, we infer that $j \not=1$ or $j \not =2$.  This implies that $(g',j) \in N(\{(g,1),(g,2)\})$, which contradicts the independence of $I$.  Therefore,
$\left| I \cap {\LSs{g}{K_n}} \right | \in \{0,1,n\}$.
\end{proof}

The following result gives tight upper and lower bounds for $i(G \times K_2)$.
\begin{thm} \label{thm:lowerandupper2}
If $G$ is any graph with no isolated vertices, then
\[\gamma_t(G) \le i(G \times K_2) \le \min\{2i(G),n(G)\}\,.\]
\end{thm}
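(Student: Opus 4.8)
The upper bound requires no new work: it is the instance $H = K_2$ of the second item of Corollary~\ref{cor:triviallower}. Since $n(K_2)=2$ and a single vertex is a maximal independent set of $K_2$ (so $i(K_2)=1$), that corollary gives $i(G \times K_2) \le \min\{2\,i(G),\, n(G)\}$. So I would dispose of the upper bound in a single sentence and concentrate all effort on the lower bound $\gamma_t(G) \le i(G \times K_2)$.

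For the lower bound the plan is to take a maximal independent set $I$ of $G \times K_2$ with $|I| = i(G \times K_2)$ and manufacture from it a total dominating set of $G$ of cardinality at most $|I|$. Write $A = \{g \in V(G) : (g,1) \in I\}$ and $B = \{g \in V(G) : (g,2) \in I\}$, so that $|I| = |A| + |B|$. Because the only edges of $G \times K_2$ join $(g_1,1)$ to $(g_2,2)$ with $g_1 g_2 \in E(G)$, the independence of $I$ says precisely that no edge of $G$ runs between $A$ and $B$, while the fact that $I$ dominates $(g,1)$ (respectively $(g,2)$) for every $g$ translates into $g \in A \cup N_G(B)$ (respectively $g \in B \cup N_G(A)$). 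I would first record these two membership conditions, as they drive the rest of the argument.

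The natural candidate for a total dominating set is the projection $A \cup B$, and a short case analysis on the two conditions above shows that every vertex $g$ of $G$ already has a neighbor in $A \cup B$ except when $g$ lies in $A \cap B$ and has no neighbor there at all. Collect these exceptional vertices into $L = \{g \in A \cap B : N_G(g) \cap (A \cup B) = \emptyset\}$. For each $g \in L$ the no-isolated-vertex hypothesis supplies a neighbor $g'$, necessarily outside $A \cup B$; adding one such $g'$ per vertex of $L$ repairs total domination at $g$, and one checks that each added $g'$ is itself totally dominated, using that it must have a neighbor in $A$ and a neighbor in $B$ (forced by domination of $(g',1)$ and $(g',2)$). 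This produces a total dominating set $D = (A \cup B) \cup \{g' : g \in L\}$.

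The final step is the count, which is where the argument genuinely pays off. Each vertex of $A \cap B$ is counted twice in $|I| = |A| + |B|$ but only once in $A \cup B$, so $|A \cup B| = |I| - |A \cap B|$. Since $L \subseteq A \cap B$ and at most $|L|$ new vertices are added, $|D| \le |I| - |A \cap B| + |L| \le |I|$, whence $\gamma_t(G) \le |D| \le i(G \times K_2)$. I expect the main obstacle to be exactly the treatment of $L$: one must notice that the plain projection $A \cup B$ can fail to be a total dominating set (for instance when $I = \{(b,1),(b,2)\}$ in $P_3 \times K_2$) and then exploit the double-counting slack coming from the full layers to afford the extra neighbors without exceeding $|I|$.
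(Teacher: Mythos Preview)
Your argument is correct and follows essentially the same strategy as the paper: from a minimum independent dominating set $I$ of $G \times K_2$, project to $G$ and add one chosen neighbor per deficient vertex to build a total dominating set of cardinality at most $|I|$. The paper's execution differs only cosmetically---it handles the case where one $G$-layer of $I$ is empty separately, and otherwise takes $A$ together with one neighbor of \emph{every} vertex of $B$ (rather than $A\cup B$ patched only on your set $L$), arguing inside the layer $G^1$ that this set dominates $G^2$ and hence projects to a total dominating set; your use of the inclusion--exclusion slack from $A\cap B$ absorbs that case split uniformly.
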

\begin{proof}  The upper bound follows from Corollary~\ref{cor:triviallower}.  Let $M$ be an independent dominating set of $G \times K_2$ such that $i(G \times K_2)=|M|$.  If $M^1$ or $M^2$ is empty, say $M^2=\emptyset$, then $(g,1) \in M$ for every $g\in V(G)$.  Hence, $|M|=n(G)\ge \gamma_t(G)$.  Thus, assume that $M^1\not= \emptyset$ and $M^2\not=\emptyset$.  For each $(g,2) \in M$, choose $g' \in N_G(g)$.  Let $\widehat{M}= M^1 \cup \{(g',1)\,\colon\,(g,2) \in M\}$.  It is clear that $\widehat{M}$ dominates $G^2$.  For if $(g,2) \in M$, then $(g',1) \in \widehat{M}$ and $(g',1)$ is a neighbor of $(g,2)$.  If $(g,2) \not\in M$, then $M^1$ contains a neighbor of $(g,2)$ since $M$ is a dominating set of $G \times K_2$.  Consequently, $p_G(\widehat{M})$ is a total dominating set of $G$, and we get
\[\gamma_t(G) \le |p_G(\widehat{M})| \leq |\widehat{M}| \le |M|=i(G \times K_2)\,.\]
\end{proof}

Any graph $G$ that has a vertex of degree $n(G)-1$ shows that the lower bound in Theorem~\ref{thm:lowerandupper2} is tight.  For the upper bound let $G=K_{n,n}$.  Since $G \times K_2 =2G$, we see that the upper bound is also tight.

Let $I$ be any maximal independent set of $G \times K_n$ and let $n\ge 2$ be a positive integer.  As in~\cite{kr-2022} we use Lemma~\ref{lem:sizeoflayers} to define a weak partition of $V(G)$.  We will say this weak partition is \emph{generated by} or \emph{corresponds to} $I$.
(A weak partition of a set $X$ is a collection of pairwise disjoint subsets of $X$, in which some may be empty, whose union is $X$.)  In particular, $V_0,V_1,\ldots,V_n,V_{[n]}$ defined by
\begin{enumerate}
\item[{\bf(a)}] $V_0=\{ g \in V(G)\,\colon\, I \cap {\LSs{g}{K_n}} = \emptyset\}$;
\item[{\bf(b)}] For each $k \in [n]$, $V_k=\{ g \in V(G)\,\colon\, I \cap {\LSs{g}{K_n}} =\{(g,k)\} \}$;
\item[{\bf(c)}] $V_{[n]}=\{ g \in V(G)\,\colon\, I \cap {\LSs{g}{K_n}}={\LSs{g}{K_n}} \}$.
\end{enumerate}
is a weak partition.  Furthermore, the following four conditions hold.
\begin{enumerate}
\item For $k \in [n]$, if $u \in V_k$ and $v \in V(G)-(V_0 \cup V_k)$, then $uv \not\in E(G)$.
\item For $k \in [n]$, if $V_k$ is not empty, then no vertex of $V_k$ is isolated in $G[V_k]$.
\item The set $V_{[n]}$ is independent in $G$.
\item For each $g \in V_0$, either $N_G(g) \cap V_{[n]} \not=\emptyset$ or $g$ has a neighbor in at least two of the sets $V_1,\ldots,V_n$.
\end{enumerate}

Conversely, for a given weak partition of $V(G)$ that satisfies these four conditions, it is clear how to construct a maximal independent set $D$ of $G \times K_n$.
We then say this weak partition \emph{constructs} $D$.  The independent domination number of $G \times K_n$ can be computed in the following way.
\begin{equation} \label{eqn:indepdomlabel}
i(G \times K_n)= \min\{n\cdot |V_{[n]}|+ \sum_{k=1}^n|V_k|\}\,,
\end{equation}
where the minimum is computed over all weak partitions $V_0,V_1,\ldots,V_n,V_{[n]}$ that satisfy conditions $1{-}4$ above.

We used a computer program to compute the independent domination numbers of the direct product of small paths and small cycles with complete graphs.  In Table~\ref{Tab:Special} we accumulate some of these values.  Because of the smallest independent dominating sets produced by our software, it is clear that these same values hold if $K_3$ is replaced by $K_n$ in the direct product for any $n \ge 4$.

\begin{table}[ht!]
\begin{center}
\begin{tabular} { |c | c c c c c c c c c c | }
\hline
$m$  &3&4&5&6&7&8&9&10&11&12 \\ \hline
$i(P_m \times K_3)$ &3&4&4&5&6&6&7&8&8&9 \\ \hline
$i(C_m \times K_3)$ &3&4&5&4&5&6&6&7&8&8 \\ \hline
\end{tabular}
\caption{Some independent domination numbers}\label{Tab:Special}
\end{center}
\end{table}

Instead of specifically listing the sets $V_0,V_1,\ldots,V_n,V_{[n]}$ we can instead represent this weak partition by a labelling of the vertices of $G$.
A vertex $x$ of $G$ is labelled with the symbol $\ast \in \{0,1,\ldots,n,[n]\}$ if and only if $x \in V_{\ast}$.  (Note that we are allowing labels
to  be used more than once.) For any weak partition
$V_0,V_1,\ldots,V_n,V_{[n]}$ (equivalently any labelling) of $V(G)$ that satisfies the four conditions, we say it is  \emph{legal} and has
\emph{weight} $n\cdot |V_{[n]}|+ \sum_{k=1}^n|V_k|$.  For the purposes of this paper, we
call any weak partition (equivalently, any labelling) of $V(G)$ \emph{optimum} if it attains the minimum weight in~\eqref{eqn:indepdomlabel} above.
We illustrate this labelling in Figure~\ref{fig:labeling}
on a cycle of order $16$ that defines a maximal independent set of the direct product $C_{16} \times K_3$.  Note that $16=6r+p$ for $r=2$ and $p=4$.
This labelling is part of the pattern denoted by $(1,1,0,2,2,0)^2(3,3,3,0)$, which means label the vertices of $C_{16}$ in consecutive order by
repeating the sequence of six  labels $1,1,0,2,2,0$ two ($r=2$) times followed by the sequence of four ($p=4$) labels $3,3,3,0$ one time.

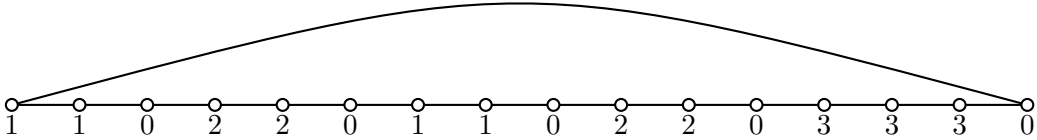
\begin{figure}[ht!]
\begin{center}
\begin{tikzpicture}[scale=0.45,style=thick]
\def\vr{5pt} % \vr = vertex radius;  Set \vr = 2/scale for uniform sizing of vertices
%%%
%%% Vertices:
\path (-16,0) coordinate (0); \path (-14,0) coordinate (1);
\path (-12,0) coordinate (2);  \path (-10,0) coordinate (3);
\path (-8,0) coordinate (4);  \path (-6,0) coordinate (5);
\path (-4,0) coordinate (6);  \path (-2,0) coordinate (7);
\path (0,0) coordinate (8);  \path (2,0) coordinate (9);
\path (4,0) coordinate (10);  \path (6,0) coordinate (11);
\path (8,0) coordinate (12);  \path (10,0) coordinate (13);
\path (12,0) coordinate (14);  \path (14,0) coordinate (15);
\draw (0) .. controls (-1,4) .. (15);
%%% Vertices: Horizontal
%%% Draw edges
\draw (0) -- (1); \draw (1) -- (2);  \draw (2) -- (3);  \draw (3) -- (4);  \draw (4) -- (5);  \draw (5) -- (6);
 \draw (6) -- (7);  \draw (7) -- (8);  \draw (8) -- (9);  \draw (9) -- (10);  \draw (10) -- (11);  \draw (11) -- (12);
 \draw (12) -- (13);  \draw (13) -- (14);  \draw (14) -- (15);

\foreach \i in {0,...,15}
{  \draw (\i)  [fill=white] circle (\vr); }
\foreach \i in {2,5,8,11,15}
\draw[anchor = north] (\i) node {$0$};
\foreach \i in {0,1,6,7}
\draw[anchor = north] (\i) node {$1$};
\foreach \i in {3,4,9,10}
\draw[anchor = north] (\i) node {$2$};
\foreach \i in {12,13,14}
\draw[anchor = north] (\i) node {$3$};
%\draw[anchor = north] (0) node {$\mathbf x_1$}; \draw[anchor = north] (1) node {$\mathbf x_2$};
%\draw[anchor = east] (13) node {$\mathbf a$}; \draw[anchor = east] (14) node {$\mathbf b$};
%\draw[anchor = east] (15) node {$\mathbf c$};
\end{tikzpicture}
\end{center}
\caption{Labeling of $C_{16}$}
\label{fig:labeling}
\end{figure}

\begin{prop} \label{prop:pathscycles}
Let $m$ and $n$ be positive integers with $m \ge 3$ and $n \ge 3$.  Then,
\begin{itemize}
\item[{\rm(a)}] $i(P_m \times K_2)=2\lceil \frac{m}{3} \rceil$.
\item [{\rm(b)}] $i(C_m \times K_2)= \lceil \frac{2m}{3} \rceil$ if $m$ is odd, and $i(C_m \times K_2)= 2\lceil \frac{m}{3} \rceil$ if $m$ is odd.
\item [{\rm(c)}] $i(C_m \times K_n)=m$ for $3 \le m \le 5$, and $i(C_m \times K_n)=\left\lceil \frac{2m}{3} \right \rceil$, for every $m \ge 6$.
\item [{\rm(d)}] $i(P_m \times K_n)=\left\lceil \frac{2m+2}{3} \right \rceil$.
\end{itemize}
\end{prop}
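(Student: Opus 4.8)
The plan is to dispatch the two $K_2$ statements (a) and (b) first, since these reduce to known independent domination numbers, and then to handle (c) and (d) entirely through the optimization in~\eqref{eqn:indepdomlabel}. For (a) and (b) I would use the component structure of $G \times K_2$: for a connected bipartite graph $F$ one has $F \times K_2 \cong 2F$, while for a connected non-bipartite graph $F \times K_2$ is connected. Since $P_m$ is bipartite, $P_m \times K_2 \cong 2P_m$, so $i(P_m \times K_2)=2\,i(P_m)=2\lceil m/3\rceil$; for even $m$ the same reasoning gives $i(C_m \times K_2)=2\,i(C_m)=2\lceil m/3\rceil$, and for odd $m$ one checks that $C_m \times K_2 \cong C_{2m}$, whence $i(C_m \times K_2)=i(C_{2m})=\lceil 2m/3\rceil$. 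Here I use the standard values $i(P_m)=i(C_m)=\lceil m/3\rceil$, so (a) and (b) are bookkeeping once this component structure is recorded.

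For (c) and (d) with $n \ge 3$ I would minimize $n|V_{[n]}|+\sum_{k=1}^n|V_k|$ over legal labellings. The first step is to read off the structure forced by conditions~$1$--$4$ in the cheap regime $V_{[n]}=\emptyset$: by condition~2 each maximal block of a single nonzero label has length at least $2$; by condition~1 distinct nonzero labels never occur on adjacent vertices; and by condition~4 each zero must then have two neighbors carrying distinct nonzero labels, which forces the zeros to be isolated and each sandwiched between two such blocks. An optimum labelling of $P_m$ or $C_m$ therefore consists of nonzero blocks of length $\ge 2$ separated by single zeros, with consecutive blocks differently labelled (always possible since $n \ge 3$). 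The upper bounds follow by writing down explicit patterns of this shape --- the block $1,1,0,2,2,0$ of Figure~\ref{fig:labeling} repeated, with the final block lengthened and the endpoints placed according to $m \bmod 3$ --- and counting the weight, which comes out to $\lceil 2m/3\rceil$ for $C_m$ and $\lceil (2m+2)/3\rceil$ for $P_m$.

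For the lower bounds I would count with the same block picture. If a labelling of $C_m$ with $V_{[n]}=\emptyset$ uses $g \ge 1$ zeros, the zeros cut the cycle into $g$ nonzero blocks of length $\ge 2$, so $m=\sum r_i + g \ge 3g$, giving $g \le m/3$ and weight $m-g \ge 2m/3$; with no zeros the weight is $m$. On $P_m$ the two endpoints cannot be zero (an endpoint has a single neighbor, so condition~4 cannot be met without a neighbor in $V_{[n]}$), so a labelling with $V_{[n]}=\emptyset$ has $t$ blocks but only $t-1$ separating zeros, whence $m=\sum r_i+(t-1)\ge 3t-1$, $t\le (m+1)/3$, and weight $m-(t-1)\ge (2m+2)/3$. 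Taking ceilings gives the claimed values. The exceptional cycles $3\le m\le 5$ fall out of the same analysis: with $g=1$ the unique zero is flanked on both sides by the single available block, so its two neighbors share a label and condition~4 fails; hence no zero is admissible and the weight equals $m$.

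The step I expect to require the most care is ruling out any advantage from $V_{[n]}$, since condition~4 permits a vertex of $V_{[n]}$ to support adjacent zeros and even a run of two zeros, so the clean block description above is only justified once $V_{[n]}$ is shown to be wasteful. Because $n \ge 3$, each vertex of $V_{[n]}$ already costs at least $3$, while on a path or cycle it can serve at most its two neighbors; a configuration such as $0,[n],0$ spends at least $3$ on three vertices, strictly worse than the rate $2/3$ of the efficient pattern. I would make this rigorous either by a local exchange argument that replaces each $V_{[n]}$-vertex together with the zeros it serves by a cheaper all-nonzero block, or by folding the inequality $n|V_{[n]}|\ge 3|V_{[n]}|$ directly into the block count so that any use of $V_{[n]}$ only increases the weight; in either case the lower bounds above survive and match the constructions.
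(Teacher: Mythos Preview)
Your proposal is correct and follows essentially the same route as the paper: the $K_2$ cases are handled via the component structure of $G\times K_2$, and for $n\ge 3$ both you and the paper work through the weak-partition/labelling framework of~\eqref{eqn:indepdomlabel}, giving explicit patterns for the upper bounds and, for the lower bounds, first eliminating $V_{[n]}$ by local exchanges and then bounding the number of zeros. Your block-counting formulation ($g$ isolated zeros cutting the cycle into $g$ blocks of length $\ge 2$, hence $m\ge 3g$; on the path $t$ blocks and $t-1$ zeros with $m\ge 3t-1$) is just a clean repackaging of the paper's ``at most one zero among any three consecutive vertices'' and ``at most $\lfloor (m-2)/3\rfloor$ zeros on $P_m$'' counts, and your treatment of the small cycles $3\le m\le 5$ (showing $g=1$ is infeasible and $g\ge 2$ forces $m\ge 6$) makes explicit what the paper records as ``easy to check.''
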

\begin{proof} Note that $C_m \times K_2=C_{2m}$ if $m$ is odd, and $C_m \times K_2=2C_{m}$ if $m$ is even.  Also, for any $m$,
$P_m \times K_2=2P_m$.   Statements (a) and (b) now follow from $i(C_k)=i(P_k)=\lceil k/3 \rceil$.

Now consider $C_m \times K_n$ for some $n \ge 3$.  It is easy to check that the vertex labellings $(1,1,1)$, $(1,1,1,1)$ and
$(1,1,1,1,1)$  of $C_3$, $C_4$ and $C_5$ respectively are optimum for the direct products $C_m \times K_n$ for $3 \le m \le 5$.
Now, let $m \ge 6$.  Table~\ref{Tab:Largerm} presents labelling patterns of $V(C_m)$ and $V(P_m)$, based on the congruence of $m$ modulo $6$,
that establish the upper bound of $\lceil 2m/3 \rceil$ for $i(C_m \times K_n)$ and of $\left\lceil \frac{2m+2}{3} \right \rceil$ for $i(P_m \times K_n)$.

We may assume that in any optimum labelling of $V(C_m)$ no vertex receives the label $[n]$.  For, suppose some vertex $x$ is labelled $[n]$. By conditions $1$ and $3$
both of the neighbors of $x$ are labelled $0$.  For the vertices of $C_m$ within distance $2$ of $x$, the labelling sequence
$(i,0,[n],0,i)$ can be replaced with $(i,i,0,i,i)$  and the sequence  $(i,0,[n],0,j)$, for $i \neq j$, can be replaced with $(i,i,0,j,j)$.
This in turn implies that at most one vertex of any  three consecutive
vertices of $C_m$ can be labelled $0$.  That is, at most $\lfloor m/3 \rfloor$ vertices can be labelled $0$.  Since
$m=\lfloor m/3\rfloor +\lceil 2m/3 \rceil$, we get $i(C_m \times K_n) \ge \lceil 2m/3 \rceil$.  This establishes statement (c).

We use essentially the same reasoning to prove the lower bound for $i(P_m \times K_n)$ with one small difference, this being that the path
has two vertices of degree $1$.  This forces several additional cases.  Suppose the path $P_m$ is $v_1,v_2,\ldots,v_{m-1},v_m$.
We claim that without loss of generality we may assume that any
optimum labelling of $V(P_m)$ does not use the label $[n]$.  We modify any labelling that has such a vertex $x$ with label $[n]$ in such a way
that the weight is not increased.  Note that every neighbor of $x$ is labelled $0$.  Suppose first that $x$ and the vertices within distance $2$
of $x$ all have degree $2$. As above,  the labelling sequence $(i,0,[n],0,i)$ can be replaced with $(i,i,0,i,i)$  and the sequence
$(i,0,[n],0,j)$, for $i \neq j$, can be replaced with $(i,i,0,j,j)$.  The labelling sequence $([n],0,[n],0,i)$ can be replaced with $([n],0,i,i,i)$;
similarly $(i,0,[n],0,[n])$ can be replaced with $(i,i,i,0,[n])$.  Now suppose that $x=v_2$.
The sequence $(0,[n],0,i,i)$ can be replaced with $(i,i,i,i,i)$, the sequence $(0,[n],0,[n],0)$ can be replaced with
$(i,i,0,[n],0)$, and $(0,[n],0,0,[n])$ can be replaced with $(i,i,i,0,[n])$.  The case $x=v_{m-1}$ is handled in a similar fashion.
Finally, if $0$ and $[n]$ are the only labels used, then the only case to consider is when one or both of the end vertices is labelled $[n]$. Suppose
$v_1$ is labelled $[n]$.  The labelling sequence $([n],0,[n],0,[n])$ can be replaced with $(i,i,i,0,[n])$,  the sequence $([n],0,[n],0,0)$
can be replaced with $(i,i,i,i,0)$, and $([n],0,0,[n],0)$ can be replaced with $(i,i,0,[n],0)$.  The case where $v_m$ is labelled $[n]$ is handled
symmetrically.  Thus, we may assume that no optimum labelling of $V(P_m)$ uses the label $[n]$.

Therefore, any optimum labelling of $V(P_m)$ has weight $\sum_{k=1}^n|V_k|$ since $V_{[n]}=\emptyset$.  By considering the three cases of $m$ modulo
$3$ and using conditions $1,2$ and $4$, it is now easy to see that at most $\left \lfloor \frac{m-2}{3} \right \rfloor$ vertices of $P_m$ can be labelled $0$.
Since $m=  \left \lfloor \frac{m-2}{3} \right \rfloor + \left \lceil \frac{2m+2}{3} \right \rceil$, it follows that any legal
labelling of $V(P_m)$ has weight at least $\left \lceil \frac{2m+2}{3} \right \rceil$.  This lower bound coincides with the values
given in Table~\ref{Tab:Largerm}, which finishes the proof.
\end{proof}
\begin{table}
\begin{center}
\begin{tabular} { |c | c | c |}
\hline
$m=$ & labelling pattern of $C_m$ & labelling pattern of $P_m$  \\ \hline
$6r$ & $(1,1,0,2,2,0)^r$ & $(1,1,0,2,2,0)^{r-1}(1,1,0,2,2,2)$\\ \hline
$6r+1$ & $(1,1,0,2,2,0)^{r-1}(1,1,0,2,2,2,0)$ & $(1,1,0,2,2,0)^{r-1}(1,1,1,0,2,2,2)$\\ \hline
$6r+2$ & $(1,1,0,2,2,0)^{r-1}(1,1,0,2,2,2,2,0)$ & $(1,1,0,2,2,0)^r(1,1)$\\ \hline
$6r+3$ & $(1,1,0,2,2,0)^r (3,3,0)$ & $(1,1,0,2,2,0)^r(1,1,1)$\\ \hline
$6r+4$ & $(1,1,0,2,2,0)^r (3,3,3,0)$ & $(1,1,0,2,2,0)^r(1,1,1,1)$\\ \hline
$6r+5$ & $(1,1,0,2,2,0)^r (3,3,3,3,0)$ & $(1,1,0,2,2,0)^r(1,1,0,2,2)$\\ \hline
\end{tabular}
\caption{Construction of minimum independent dominating sets}\label{Tab:Largerm}
\end{center}
\end{table}

\section{Lower bounds} \label{sec:lowerbounds}

In the section we prove some lower bounds for $i(G \times H)$ in terms of other domination-type graphical invariants.
Using an argument similar to that in the proof of Theorem~\ref{thm:lowerandupper2}, we can establish a lower bound for the independent
domination number of two graphs, neither of which has an isolated vertex.

\begin{prop} \label{prop:generallower}
If $G$ and $H$ are any two graphs that both have minimum degree at least 1, then
\[i(G\times H) \ge \max\{\rho(G)\gamma_t(H),\rho(H)\gamma_t(G)\}\,.\]
\end{prop}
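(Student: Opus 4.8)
The plan is to prove the single inequality $i(G\times H)\ge \rho(G)\gamma_t(H)$; since the direct product is commutative, interchanging the roles of $G$ and $H$ then yields $i(G\times H)\ge \rho(H)\gamma_t(G)$, and the stated maximum follows. So I would fix a minimum independent dominating set $I$ of $G\times H$ and a maximum $2$-packing $A$ of $G$, so that $|A|=\rho(G)$. For each $a\in A$ set $I_a=I\cap (N_G[a]\times V(H))$, the part of $I$ lying in the $H$-layer over $a$ together with the layers over the neighbors of $a$. Because any two vertices of $A$ are at distance at least $3$, their closed neighborhoods are disjoint, so the sets $\{I_a:a\in A\}$ are pairwise disjoint subsets of $I$. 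Hence $i(G\times H)=|I|\ge\sum_{a\in A}|I_a|$, and the whole proposition reduces to the local claim that $|I_a|\ge \gamma_t(H)$ for every $a\in A$.

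To establish the local claim I would, in the spirit of the proof of Theorem~\ref{thm:lowerandupper2}, manufacture from $I_a$ a total dominating set of $H$ of size at most $|I_a|$. Write $S=p_H(\LSs{a}{I})$ for the projection of the part of $I$ in the layer over $a$, and let $R$ be the projection to $H$ of the part of $I_a$ lying in the layers over the neighbors of $a$. The discussion preceding Lemma~\ref{lem:sizeoflayers} supplies exactly what is needed: if $h\notin S$ then $(a,h)\notin I$ must be dominated from a neighboring layer, so some vertex of $R$ is adjacent to $h$; and no vertex of $R$ can be adjacent to any vertex of $S$, since such an adjacency would produce two adjacent vertices of $I$. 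Consequently $R$ totally dominates $V(H)\setminus S$, while the only candidates that can totally dominate a vertex of $S$ lie inside $S$ itself.

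The delicate point, and the step I expect to be the main obstacle, is that $S$ need not totally dominate itself: a vertex $h_0\in S$ may be isolated in $H[S]$. I would split $S$ into those vertices having a neighbor in $S$ (which then automatically totally dominate one another, as the $S$-neighbor of such a vertex again has an $S$-neighbor) and the ``bad'' vertices that are isolated in $H[S]$. For each bad vertex $h_0$, using that $H$ has no isolated vertex I would choose a neighbor $h_0^{\ast}$, necessarily lying outside $S$, and adjoin it to the set, intuitively paying for it with the vertex $(a,h_0)\in I$. The resulting set $D_a$, consisting of $R$, the non-isolated part of $S$, and one chosen neighbor per bad vertex, then totally dominates $H$; checking this is a routine verification over the three types of vertices (those outside $S$, the good vertices of $S$, and the bad vertices of $S$).

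Finally I would compare cardinalities. On one hand $D_a$ has at most $|R|+|S|$ elements. On the other hand $I_a$ is the disjoint union of its layer-over-$a$ part, of size $|\LSs{a}{I}|=|S|$, with its neighbor-layer part, whose size is at least $|R|$; hence $|I_a|\ge |S|+|R|\ge |D_a|\ge \gamma_t(H)$. The degenerate situations $\LSs{a}{I}=\emptyset$ (where $S=\emptyset$ and $D_a=R$) and $\LSs{a}{I}=\LSs{a}{H}$ (where $|I_a|\ge n(H)\ge \gamma_t(H)$ directly, since $n(H)\ge\gamma_t(H)$) are absorbed by this same accounting. Summing the local bound over $a\in A$ gives $i(G\times H)\ge|A|\,\gamma_t(H)=\rho(G)\gamma_t(H)$, completing the argument.
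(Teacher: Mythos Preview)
Your argument is correct and follows essentially the same route as the paper: take a maximum $2$-packing $A$ of $G$, use the pairwise disjointness of the sets $I\cap(N_G[a]\times V(H))$, and for each $a\in A$ manufacture a total dominating set of $H$ of size at most $|I_a|$ by projecting to $H$ and repairing the vertices of $S=p_H(\LSs{a}{I})$ that are not totally dominated. The paper's repair step is a little simpler than yours: rather than splitting $S$ into good and bad vertices, it just adjoins, for \emph{every} $(a,h)\in I$, the $H$-projection of an arbitrary neighbor of $(a,h)$ in $G\times H$, which already yields a total dominating set of size at most $|R|+|S|$ without any case analysis.
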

\begin{proof}   Let $I$ be an independent dominating set of $G \times H$ of smallest cardinality. For a vertex $g$ of $G$, let $J_g=I \cap (N_G(g) \times V(H))$.  Since $I$ is a dominating set, each vertex of $\{g\} \times V(H)$ is either in $I$ or is adjacent to a vertex in $J_g$.  Furthermore, since $I$ is independent, exactly one of these holds for each vertex
in $\{g\} \times V(H)$.  For each $h\in V(H)$ such that $(g,h) \in I$, fix a single neighbor $(g',h')$ of $(g,h)$.  Finally, let
$\widehat{J}_g = J_g \cup \{(g',h')\,\colon\,(g,h) \in I\}$.  Note that $|\{(g',h')\,\colon\,(g,h) \in I\}|\le |\LSs{g}{I}|$ and that
the projection $p_H(\widehat{J}_g)$ is a total dominating set of $H$. Let $A$ be a maximum 2-packing of $G$.  It now follows that
\[|I|=\sum_{x \in V(G)}|\LSs{x}{I}| \ge \sum_{g\in A}|I \cap(N_G[g]\times V(H))|\ge \sum_{g\in A}|\widehat{J}_g|\ge \sum_{g\in A}|p_H(\widehat{J}_g)|\ge \rho(G)\gamma_t(H)\,.\]
By reversing the roles of $G$ and $H$ in the above argument we get the desired conclusion.
\end{proof}

In Section~\ref{sec:counterexamples} we demonstrate the existence of pairs of graphs $G$ and $H$ such that
$i(G \times H) < \min\{i(G),i(H)\}$.  The following corollary to Proposition~\ref{prop:generallower} shows that when
both factors are claw-free this is not possible.

\begin{cor}
If $G$ and $H$ are both claw-free with no isolated vertices, then
\[i(G \times H)\geq \max\{i(G),i(H)\}\,.\]
\end{cor}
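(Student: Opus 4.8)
The plan is to reduce the corollary to Proposition~\ref{prop:generallower} together with two elementary facts about a single claw-free graph. First I would record that any graph with no isolated vertices has $2$-packing number at least $1$, since a one-element set is vacuously a $2$-packing; thus $\rho(G)\ge 1$ and $\rho(H)\ge 1$. Feeding this into Proposition~\ref{prop:generallower} immediately gives $i(G\times H)\ge \rho(H)\gamma_t(G)\ge \gamma_t(G)$ and, symmetrically, $i(G\times H)\ge \rho(G)\gamma_t(H)\ge \gamma_t(H)$. So it remains only to compare $\gamma_t$ with $i$ for each factor.

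The second ingredient is the relation $\gamma_t(G)\ge i(G)$ for a claw-free graph $G$ with no isolated vertices. The cleanest route is to observe that $\gamma_t(G)\ge \gamma(G)$, because every total dominating set is in particular a dominating set, and then to invoke the classical theorem of Allan and Laskar that every claw-free graph satisfies $\gamma(G)=i(G)$. Combining these yields $\gamma_t(G)\ge \gamma(G)=i(G)$, and likewise $\gamma_t(H)\ge i(H)$ since $H$ is claw-free. Chaining with the inequalities from the previous paragraph then gives $i(G\times H)\ge i(G)$ and $i(G\times H)\ge i(H)$, whence $i(G\times H)\ge\max\{i(G),i(H)\}$.

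The only nontrivial step is the claw-free identity $\gamma(G)=i(G)$; everything else is bookkeeping, and note that the hypothesis that \emph{both} factors are claw-free is used exactly once per factor. If I wanted a self-contained argument instead of citing Allan--Laskar, I would prove directly that $i(G)\le\gamma_t(G)$ for claw-free $G$: take a minimum total dominating set $D$, note that $G[D]$ has no isolated vertex, and attempt to extract an independent dominating set of size at most $|D|$, using at each problematic vertex of $V(G)\setminus D$ the fact that a claw-free graph has no three pairwise non-adjacent neighbors of a common vertex. I expect this direct construction to be the main obstacle, since verifying that the selected independent set dominates all of $V(G)\setminus D$ is precisely where the claw-free hypothesis must be applied with care; citing the theorem of Allan and Laskar sidesteps this difficulty entirely, and that is the approach I would adopt.
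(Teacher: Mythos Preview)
Your argument is correct and is essentially the paper's proof: both apply Proposition~\ref{prop:generallower} with the trivial bound $\rho\ge 1$ to reduce to $i(G\times H)\ge\max\{\gamma_t(G),\gamma_t(H)\}\ge\max\{\gamma(G),\gamma(H)\}$, and then invoke the Allan--Laskar identity $\gamma=i$ for claw-free graphs. Your write-up is slightly more explicit about the intermediate inequality $\gamma_t\ge\gamma$, but the route is the same.
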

\begin{proof}
From Proposition~\ref{prop:generallower} it follows directly that $i(G \times H)\geq \max\{\gamma(G),\gamma(H)\}$.
The result now follows  since $G$ and $H$ are claw-free, which implies that  $\gamma(G)=i(G)$ and $\gamma(H)=i(H)$.
\end{proof}

\begin{prop} For any connected graphs $G$ and $H$,
\[i(G\times H) \ge \max\left\{\frac{{n(H)}{\gamma(G)}}{\Delta(H)+1}, \frac{{n(G)}{\gamma(H)}}{\Delta(G)+1} \right\}\,.\]
\end{prop}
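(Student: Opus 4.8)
The plan is to establish the first of the two lower bounds, namely $i(G \times H) \ge \frac{n(H)\,\gamma(G)}{\Delta(H)+1}$; the second bound then follows immediately by interchanging the roles of $G$ and $H$ (the direct product is commutative), and the maximum of the two is exactly what is claimed. So let $I$ be an independent dominating set of $G \times H$ with $|I|=i(G\times H)$. For each $h \in V(H)$ I would look at the trace of $I$ on the $G$-layer $G^h$ and project it to $G$: set $B_h = p_G(I^h)$, where $I^h = I \cap G^h$. Because $p_G$ is injective on a single $G$-layer, $|B_h|=|I^h|$, and since the layers $I^h$ partition $I$ we have $\sum_{h \in V(H)} |B_h| = |I|$.

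The crux of the argument is the following claim: for every $h \in V(H)$, the set $B_h \cup \bigcup_{h' \in N_H(h)} B_{h'}$ is a dominating set of $G$. To verify it, fix $g \in V(G)$ and consider the vertex $(g,h)$ of $G \times H$. Since $I$ dominates $G \times H$, either $(g,h) \in I$, in which case $g \in B_h$, or some $(g',h') \in I$ is adjacent to $(g,h)$. In the latter case the definition of the direct product forces $g' \in N_G(g)$ and $h' \in N_H(h)$, so $g' \in B_{h'}$ with $h' \in N_H(h)$, and $g$ is dominated in $G$ by $g'$. In either case $g$ lies in the closed neighborhood of the displayed set, which proves the claim. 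Consequently $|B_h| + \sum_{h' \in N_H(h)} |B_{h'}| \ge \gamma(G)$ for every $h$. I expect this translation of the product's domination condition into a genuine dominating set of the factor $G$ — keeping careful track of which layers are allowed to contribute through the adjacency condition $h' \in N_H(h)$ — to be the main obstacle; everything after it is bookkeeping.

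The final step is to sum the inequality $|B_h| + \sum_{h' \in N_H(h)} |B_{h'}| \ge \gamma(G)$ over all $h \in V(H)$ and count each piece in two ways. The first sum contributes $\sum_{h} |B_h| = |I|$. For the double sum I would switch the order of summation: each $B_{h'}$ is counted once for every $h$ with $h' \in N_H(h)$, i.e.\ $|N_H(h')|$ times, so $\sum_{h}\sum_{h' \in N_H(h)} |B_{h'}| = \sum_{h'} |N_H(h')|\,|B_{h'}| \le \Delta(H)\sum_{h'}|B_{h'}| = \Delta(H)\,|I|$. Combining these gives $(\Delta(H)+1)\,|I| \ge n(H)\,\gamma(G)$, which rearranges to $i(G\times H) \ge \frac{n(H)\,\gamma(G)}{\Delta(H)+1}$. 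The symmetric argument yields $i(G\times H) \ge \frac{n(G)\,\gamma(H)}{\Delta(G)+1}$, completing the proof.
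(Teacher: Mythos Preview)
Your proof is correct and follows essentially the same approach as the paper. The paper defines $X_v = p_G(D \cap (V(G)\times N_H[v]))$, which is exactly your $\bigcup_{h' \in N_H[h]} B_{h'}$, notes that it dominates $G$, and then observes that $\sum_v |X_v|$ counts each vertex of $D$ at most $\Delta(H)+1$ times; your version unpacks this last step into an explicit double-counting argument, but the idea is identical.
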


\begin{proof} Let $D$ be an independent dominating set of $G\times H$. For each $v \in V(H)$, let $X_v = p_G(D \cap(V(G) \times N_H[v]))$.
Note that $X_v$ is a dominating set of $G$. Moreover, $\sum_{v \in V(H)} |X_v|$ counts each vertex of $D$ at most $\Delta(H)+1$ times. Thus,
\[|D| \ge \frac{\sum_{v\in V(H)} |X_v|}{\Delta(H) + 1} \ge \frac{n(H)\gamma(G)}{\Delta(H)+1}.\]
The result now follows by interchanging the roles of $G$ and $H$.
\end{proof}

If both factors of a direct product are connected and bipartite, then we get a lower bound just in terms of the domination numbers of the factors.
\begin{prop} If $G$ and $H$ are two connected bipartite graphs, then
\[i(G\times H) \ge \max\{2\gamma(G), 2\gamma(H)\}\,.\]
\end{prop}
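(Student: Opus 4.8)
The plan is to exploit the fact that a connected bipartite graph that is not trivial is disconnected by the direct product: if $G$ and $H$ are connected and bipartite, then $G \times H$ has exactly two connected components, and the domination/independent-domination structure of $G\times H$ refines along these components. The key structural fact I would establish first is that when $G$ has bipartition $(A_G, B_G)$ and $H$ has bipartition $(A_H, B_H)$, the direct product $G\times H$ splits into two components, one induced on $(A_G\times A_H)\cup(B_G\times B_H)$ and the other on $(A_G\times B_H)\cup(B_G\times A_H)$, since an edge of $G\times H$ joins $(g_1,h_1)$ to $(g_2,h_2)$ only when $g_1g_2\in E(G)$ and $h_1h_2\in E(H)$, which forces the two endpoints to change sides in \emph{both} factors simultaneously.

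**Next** I would let $I$ be an independent dominating set of $G\times H$ of minimum cardinality, so $|I|=i(G\times H)$, and write $I = I_1 \cup I_2$ where $I_j$ is the restriction of $I$ to the $j$-th component. Since $I$ is a dominating set of all of $G\times H$ and the two components are vertex-disjoint with no edges between them, each $I_j$ must dominate its own component. The heart of the argument is to show that the projection $p_G(I_j)$ (or $p_H(I_j)$) yields a dominating set of $G$, whence $|I_j|\ge |p_G(I_j)|\ge \gamma(G)$. Concretely, consider the first component on $(A_G\times A_H)\cup(B_G\times B_H)$: given any $g\in A_G$, pick any $h\in A_H$ (nonempty since $H$ is connected bipartite of order $\ge 2$); the vertex $(g,h)$ lies in this component, so it is dominated by $I_1$, meaning some $(g',h')\in I_1$ with $g'\in N_G[g]$. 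Running this over all $g\in V(G)$ shows $p_G(I_1)$ dominates $G$, and similarly $p_G(I_2)$ dominates $G$. Therefore $|I| = |I_1|+|I_2| \ge 2\gamma(G)$, and by the symmetric argument projecting to $H$ we obtain $|I|\ge 2\gamma(H)$, giving the maximum.

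**The main obstacle** I anticipate is the domination step: projecting $I_1$ to $G$ and concluding it dominates $G$ requires care, because a vertex $(g,h)$ of the component might be dominated by an element of $I_1$ that agrees in the first coordinate only after passing through a neighbor, and I must make sure that \emph{every} $g\in V(G)$ is captured, not just those in one part of the bipartition. The cleanest way around this is to observe that for fixed $g$, the set $\{g\}\times V(H)$ meets both components (it contains pairs $(g,h)$ with $h$ ranging over all of $V(H)$, hence over both parts of $H$'s bipartition), so for each component $j$ there is some $h$ with $(g,h)$ in component $j$; that vertex is dominated within component $j$, producing a neighbor or equal element of $I_j$ whose $G$-coordinate lies in $N_G[g]$. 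This guarantees $p_G(I_j)$ is genuinely a dominating set of $G$ for each $j$ separately. A secondary point to verify is that the two components really are nonempty and distinct, which holds precisely because $G$ and $H$ are connected bipartite graphs with at least one edge each; I would note that connectedness is what guarantees each factor has exactly two color classes and hence exactly two components in the product, so that the factor of $2$ is not lost.
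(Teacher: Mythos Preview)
Your proposal is correct and follows essentially the same approach as the paper: split $G\times H$ into its two components along the bipartitions, and show that the portion of a minimum independent dominating set in each component projects to a dominating set of $G$ (respectively $H$). The paper carries this out by partitioning $A_G$ and $B_G$ according to whether their fibers meet $D$ and arguing that the ``hit'' parts dominate the ``missed'' parts, whereas you argue more directly via closed neighborhoods; the underlying idea and the resulting bound $|I_j|\ge\gamma(G)$ for each component are identical.
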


\begin{proof} Let $D$ be an independent dominating set of $G\times H$. Let $A_G,B_G$ be the bipartition of $V(G)$ and $A_H,B_H$ be the
bipartition of $V(H)$.  Note that $G\times H$ is disconnected since no vertex in $(A_G\times B_H) \cup (B_G \times A_H)$ is adjacent to a vertex in
$(A_G \times A_H) \cup (B_G \times B_H)$.  Thus, it suffices to show that
\[|D \cap ( (A_G\times B_H)\cup (B_G\times A_H))| \ge \gamma(G).\]
Let $A_2=p_G(D \cap (A_G \times B_H))$ and let $A_1=A_G-A_2$.  Similarly, we let $B_2=p_G(D \cap (B_G \times A_H))$ and let $B_1=B_G-B_2$.
We claim that $A_2 \cup B_2$ is a dominating set of $G$. To see this, fix $x \in A_1$.  Since $A_G\times B_H$ is an independent set,
every vertex in $\{x\} \times B_H$ is dominated by some vertex in $B_2 \times A_H$.  This implies that $B_2$ dominates $A_1$, and
a similar argument shows $A_2$ dominates $B_1$.  Thus, $A_2 \cup B_2$ is a dominating set of $G$, and we infer that
$|D \cap ( (A_G\times B_H)\cup (B_G\times A_H))|\geq \gamma(G)$.  Similarly, $|D \cap ( (A_G\times A_H)\cup (B_G\times B_H))| \geq \gamma(G)$,
and it follows that $|D| \geq 2\gamma(G)$.
Interchanging the roles of $G$ and $H$ in the above argument shows that $|D| \geq 2\gamma(H)$, which finishes the proof.
\end{proof}

\medskip

\section{Counterexamples to Conjecture~\ref{conj:lowerbound}} \label{sec:counterexamples}

We now present counterexamples to Conjecture~\ref{conj:lowerbound}.  Let $m$ and $r$ be positive integers larger than $2$.  Let $A,B,C$ and $D$ be pairwise disjoint
independent sets of cardinality $m$.  The graph $X_m$ has vertex set and edge set defined as follows.
\begin{itemize}
\item $V(X_m)=A\cup B\cup C \cup D \cup \{x_1,x_2,x_3,x_4\}$.
\item $E(X_m)=\{x_1w\,:\,w \in A\cup C\}\cup \{x_2w\,:\,w \in B\cup D\} \cup \{x_3w\,:\,w \in A\cup B\} \cup \{x_4w\,:\,w \in C\cup D\} \cup \{x_1x_2,x_3x_4\}$.
\end{itemize}
For example, the graph $X_3$ in shown in Figure~\ref{fig:X3}.

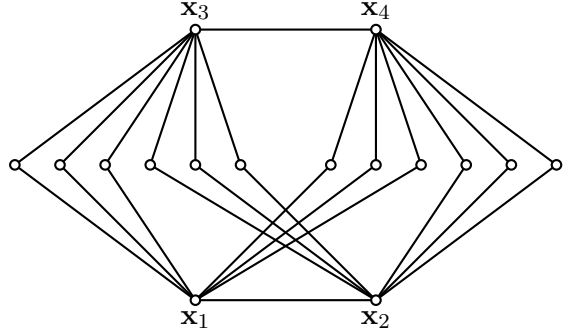
\begin{figure}[ht!]
\begin{center}
\begin{tikzpicture}[scale=0.6,style=thick]
\def\vr{3pt} % \vr = vertex radius;  Set \vr = 2/scale for uniform sizing of vertices
%%%
%%% Vertices:
\path (-2,0) coordinate (0); \path (2,0) coordinate (1);
\path (-2,6) coordinate (2);  \path (2,6) coordinate (3);
\path (-6,3) coordinate (4);  \path (-5,3) coordinate (5);
\path (-4,3) coordinate (6);  \path (-3,3) coordinate (7);
\path (-2,3) coordinate (8);  \path (-1,3) coordinate (9);
\path (1,3) coordinate (10);  \path (2,3) coordinate (11);
\path (3,3) coordinate (12);  \path (4,3) coordinate (13);
\path (5,3) coordinate (14);  \path (6,3) coordinate (15);
%%% Vertices: Horizontal
%%% Draw edges
\draw (0) -- (1); \draw (2) -- (3);
\foreach \i in {4,...,6}
{  \draw (0) -- (\i); \draw (2) -- (\i); }
\foreach \i in {10,...,12}
{  \draw (0) -- (\i); \draw (3) -- (\i); }
\foreach \i in {7,...,9}
{  \draw (1) -- (\i); \draw (2) -- (\i); }
\foreach \i in {13,...,15}
{  \draw (1) -- (\i); \draw (3) -- (\i); }
%%% Draw vertices
\draw (0)  [fill=white] circle (\vr); \draw (1)  [fill=white] circle (\vr);
\draw (2)  [fill=white] circle (\vr); \draw (3)  [fill=white] circle (\vr);
\foreach \i in {4,...,15}
{  \draw (\i)  [fill=white] circle (\vr); }
\draw[anchor = south] (2) node {$\mathbf x_3$}; \draw[anchor = south] (3) node {$\mathbf x_4$};
\draw[anchor = north] (0) node {$\mathbf x_1$}; \draw[anchor = north] (1) node {$\mathbf x_2$};
%\draw[anchor = east] (13) node {$\mathbf a$}; \draw[anchor = east] (14) node {$\mathbf b$};
%\draw[anchor = east] (15) node {$\mathbf c$};
\end{tikzpicture}
\end{center}
\caption{The graph $X_3$}
\label{fig:X3}
\end{figure}

We claim that $i(X_m)=m+2$. The set $D \cup \{x_1,x_3\}$ is an independent dominating set of $X_m$, and hence $i(X_m) \le m+2$.  Now let $J$ be any independent dominating set of $X_m$.  If $J$ has a nonempty intersection with one of $A,B,C$ or $D$, then $J$ contains all the vertices of that set.  On the other hand, no independent subset of $\{x_1,x_2,x_3,x_4\}$ dominates all of $A\cup B\cup C \cup D$.  This establishes the claim.

Let $H_r$ be the complete multipartite graph of order $2r$ in which each of the
$r$ partite sets has cardinality $2$.  More specifically, let the partite sets be $\{u_i,v_i\}$ for $i\in [r]$.  It is straightforward to check that the set $I$ defined by
$I=(\{x_1,x_2\} \times \{u_1,v_1\}) \cup (\{x_3,x_4\} \times \{u_2,v_2\})$ is an independent dominating set of $X_m \times H_r$.  Therefore,
\[i(X_m \times H_r) \le 8 < (m+2)2=i(X_m)i(H_r)\,.\]
This shows that Conjecture~\ref{conj:lowerbound} is false.  Moreover, it shows that the difference $i(G)i(H)-i(G \times H)$ can be arbitrarily large.  In fact,
for $m\ge 7$ we see that $i(X_m \times H_r) < i(X_m)$.

Since the above shows there exist pairs of graphs $G$ and $H$ such that $i(G \times H)$ is not only strictly smaller than $i(G)i(H)$
but can be smaller than $\max\{i(G),i(H)\}$, we are led to the following obvious question.

\begin{ques} \label{ques:1}
Is $i(G\times H) \ge \min\{i(G),i(H)\}$ for every pair of graphs $G$ and $H$?
\end{ques}
\medskip

We now prove Theorem~\ref{thm:extremecounterexample}, which answers the above question in the negative and in fact shows
that the difference $\min\{i(G),i(H)\}- i(G\times H)$ can be arbitrary large.

\medskip
\noindent \textbf{Theorem~\ref{thm:extremecounterexample}} \emph{
For any positive integer $n$ such that $n>10$, there exists a pair of graphs $G$ and $H$ such that
$\min\{i(G),i(H)\}=n+2$ and $i(G \times H) \leq 12$.
}
\begin{proof}
For each positive integer $n$ such
that $n>10$, we now define a pair of graphs $G_n$ and $H_n$.

Let ${\cal A}$ be the collection of subsets of $[6]$ defined by
\[{\cal A}= \{\{3, 4, 5, 6\}, \{2, 5, 6\}, \{1, 2, 3, 4\}, \{1, 3, 4, 6\}, \{1, 2, 5\}\}\,,\]
and let $A_s=\{u_s,v_s\}$, for each $s \in [6]$.  For each $J \in {\cal A}$, we let $A_J$ be an independent set of $n$ vertices.
The graph $G_n$ has vertex set
\[V(G_n) = \left(\bigcup_{s=1}^6 A_s\right) \cup \left(\bigcup_{J\in{\cal A}} A_J\right )\,.\]
The only edges of $G_n$ are given by the following three conditions.
\begin{itemize}
\item For each $s \in [6]$, the vertex $u_s$ is adjacent to $v_s$.
\item For each $J \in {\cal A}$ and for every $s \in J$, each of the $n$ vertices of $A_J$ is adjacent to both vertices of $A_s$.
\item Each of the sets $A_1 \cup A_5$, $A_1 \cup A_6$, $A_2 \cup A_3$, $A_2 \cup A_4$,  $A_2 \cup A_6$,  $A_3 \cup A_5$,  and $A_4 \cup A_5$ induces a clique in $G_n$.
\end{itemize}

We claim that $i(G_n) = n+2$. To see this, observe first that $\{u_1, u_2\} \cup A_{\{3, 4, 5, 6\}}$  is an independent dominating set of $G_n$.
To see that $i(G_n) \ge n+2$, let $X = \cup_{i=1}^6 A_i$.  It is easy to see that the only maximal independent sets in $G_n[X]$ are the following:
\begin{enumerate}
\item[(a)] $\{x, y\}$ where $x \in A_1$ and $y \in A_2$,
\item[(b)] $\{x, y, z\}$ where $x \in A_1$, $y\in A_3$, and $z \in A_4$
\item[(c)] $\{x, y\}$ where $x \in A_2$ and $y \in A_5$
\item[(d)] $\{x, y, z\}$ where $x \in A_3$, $y\in A_4$, and $z \in A_6$
\item[(e)] $\{x, y\}$ where $x \in A_5$ and $y \in A_6$
\end{enumerate}

Moreover, for each maximal independent set $I$ of $G_n[X]$ listed above, there exists a $J \in {\cal A}$ such that
no vertex of $A_J$ is adjacent to a vertex of $I$. Thus, $i(G_n) \ge n+2$.
\vskip5mm
The graph $H_n$ is defined in a similar way.
Let ${\cal B}$ be the collection of subsets of $[6]$ defined by
\[ {\cal B}= \{\{2, 3, 4, 6\}, \{2, 3, 4, 5\}, \{1, 3, 5, 6\}, \{1, 2, 4, 6\}, \{1, 3, 4, 5\}, \{1, 2, 3, 6\}, \{1, 4, 5, 6\}\}\,.\]
For each $K \in {\cal B}$ we let $B_K$ be an independent set of $n$ vertices.
The vertex set of $H_n$ is given by
\[V(H_n) = \{y_1, y_2, y_3, y_4, y_5, y_6\} \cup \left(\bigcup_{K\in{\cal B}} B_K\right )\,,\]
and the edge set of $H_n$ is given by the following two conditions.

\begin{itemize}
\item $\{y_1y_2, y_1y_3, y_1y_4, y_2y_5, y_3y_6, y_3y_4, y_4y_6, y_5y_6\} \subset E(H_n)$
\item For every $K \in {\cal B}$, the vertex $y_k$ is adjacent to each vertex of $B_K$ if and only if $k \in K$.
\end{itemize}

We claim that $i(H_n) = n+2$. One can easily verify that the only maximal independent sets in the induced subgraph
$H_n[\{y_1, y_2, y_3, y_4, y_5, y_6\}]$ are the following: $\{y_1, y_5\}$, $\{y_1, y_6\}$, $\{y_2, y_4\}$, $\{y_3, y_5\}$,  $\{y_2, y_6\}$,
$\{y_2, y_3\}$, and $\{y_4, y_5\}$.

Moreover, for each maximal independent set $I$ of $H_n[\{y_1, y_2, y_3, y_4, y_5, y_6\}]$ listed above, there exists a set $B_K$ such that no vertex of $B_K$
is adjacent either vertex of $I$. Thus, $i(H_n) \ge n+2$. On the other hand, $\{y_1, y_5\} \cup B_{\{2, 3, 4, 6\}}$ is an independent dominating set of $H_n$.

Therefore, we have shown that $i(G_n)=i(H_n)=n+2$.  We claim that the set $D$ defined by $D = \cup_{s=1}^6 \{(u_s, y_s), (v_s,y_s)\}$
is an independent dominating set of $G_n \times H_n$.
It is clear that $\{(u_s, y_s), (v_s,y_s)\}$ is independent for each $s \in [6]$.  Now suppose $(a, y_j)$ and $(b, y_k)$ are adjacent where $a \in A_j$ and
$b \in A_k$  for $1 \le j\le k \le 6$.  It follows that $ab \in E(G_n)$ and $y_jy_k \in E(H_n)$. However, by construction, each vertex of $A_j$ is adjacent to each vertex of $A_k$ only if
$\{y_j, y_k\}$ is an independent set in $H_n$, which is a contradiction.  Hence, $D$ is independent in $G_n \times H_n$.

 Now we verify that $D$ dominates $G_n\times H_n$. First, we show that all vertices of $X\times \{y_1, y_2, y_3, y_4, y_5, y_6\}$ are dominated.
\begin{itemize}
\item $A_1 \times \{y_1\}$ dominates $A_1 \times \{y_1, y_2, y_3, y_4\}$, $A_5 \times \{y_2, y_3, y_4\}$ and $A_6 \times \{y_2, y_3, y_4\}$.
\item $A_2 \times \{y_2\}$ dominates $A_2 \times \{y_1, y_2, y_5\}$, $A_3 \times \{y_1, y_5\}$, $A_4\times \{y_1, y_5\}$, and $A_6\times \{y_1, y_5\}$.
\item $A_3 \times \{y_3\}$ dominates $A_3 \times \{y_1, y_3, y_4, y_6\}$, $A_2\times \{y_1, y_4, y_6\}$, and $A_5\times\{y_1, y_4, y_6\}$.
\item $A_4 \times \{y_4\}$ dominates $A_4\times \{y_1, y_3, y_4, y_6\}$ and $A_2 \times \{y_3\}$.
\item $A_5 \times \{y_5\}$  dominates $A_1 \times \{y_6\}$, $A_3 \times \{y_2\}$, and $A_4\times \{y_2\}$.
\item $A_6 \times \{y_6\}$  dominates $A_1\times \{y_5\}$.
\end{itemize}

Next, let $J \in {\cal A}$ and let $g \in A_J$.  It is easy to see that $\{y_j: j \in J\}$ is a total dominating set of $H_n$.  This implies that
$\cup_{j \in J} \{(u_j,y_j),(v_j,y_j)\}$ dominates $\LSs {g}{H_n}$.
Finally, let $K \in {\cal B}$ and let $h \in B_K$.  Again it is straightforward to verify that $\cup_{k \in K}A_k$ totally dominates $G_n$.
It follows that $\cup_{k \in K} \{(u_k,y_k),(v_k,y_k)\}$ dominates $G_n^h$.

Therefore, $D$ is an independent dominating set of $G_n \times H_n$ and
\[i(G_n \times H_n) \le |D|=12 < n+2=\min\{i(G_n),i(H_n)\}\,.\]
\end{proof}

\section{Conclusion}
Nowakowski and Rall posited the following list of conjectures involving a direct or Cartesian product in \cite{nr-1996}.
\begin{conj} {\rm \cite[Section 2.4]{nr-1996}} For all graphs $G$ and $H$
\begin{enumerate}
\item[1.] $ir(G\Box H) \ge ir(G)ir(H)$
\item[2.] $i(G\times H) \ge i(G)i(H)$
\item[3.] $\gamma(G\Box H) \ge \gamma(G)\gamma(H)$ (Vizing's conjecture)
\item[4.] $\Gamma(G\times H) \ge \Gamma(G)\Gamma(H)$; $\Gamma(G\Box H) \ge \Gamma(G)\Gamma(H)$
\end{enumerate}
\end{conj}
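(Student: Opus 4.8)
The plan is to exhibit, for each $n>10$, an explicit pair of graphs $G_n$ and $H_n$ built from a common template: a ``core'' on six labelled base vertices (or base pairs) together with a family of large independent ``padding'' sets, each of size $n$, whose role is to inflate the independent domination number to exactly $n+2$ while still leaving room for a cheap independent dominating set in the product. First I would fix two set systems $\mathcal{A},\mathcal{B}\subseteq 2^{[6]}$ and, for $G_n$, attach to each $J\in\mathcal{A}$ an independent set $A_J$ of $n$ vertices whose every vertex is joined to both base vertices $u_s,v_s$ for all $s\in J$; similarly I would attach $B_K$ for $K\in\mathcal{B}$ in $H_n$. The six base pairs $A_1,\dots,A_6$ in $G_n$ receive an internal clique structure, the six base vertices $y_1,\dots,y_6$ in $H_n$ receive an internal edge set, and these two structures must be chosen to be ``complementary'' in the sense made precise below.

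To establish $i(G_n)=n+2$ (and symmetrically for $H_n$) I would argue the two bounds separately. For the upper bound I simply display one independent dominating set: two suitably chosen base vertices together with one entire padding set $A_J$, giving $2+n$. For the lower bound I would first enumerate, by a short finite check, every maximal independent set of the core $G_n[X]$ where $X=\bigcup_{s=1}^6 A_s$; these fall into a handful of families. The crucial design requirement is that $\mathcal{A}$ be chosen so that for each such core maximal independent set $I$ there is some $J\in\mathcal{A}$ none of whose $A_J$-vertices is dominated by $I$. Since any maximal independent set of $G_n$ restricts to a maximal independent set of the core, it fails to dominate that $A_J$ through the base and is therefore forced to contain all of $A_J$, contributing at least $n$; together with the at-least-two vertices needed in the core this yields $i(G_n)\ge n+2$.

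The heart of the argument is the construction of the size-$12$ independent dominating set $D=\bigcup_{s=1}^6\{(u_s,y_s),(v_s,y_s)\}$ of $G_n\times H_n$. Independence of $D$ reduces to a single clean condition: whenever $A_j\cup A_k$ induces a clique in $G_n$ (so that $(a,y_j)(b,y_k)$ could be an edge of the product) the pair $y_jy_k$ must be a \emph{non}-edge of $H_n$, and conversely. Thus I would choose the clique-pairs among $A_1,\dots,A_6$ to coincide exactly with the non-edges among $y_1,\dots,y_6$; this is the complementarity that makes $D$ independent. For domination I would verify two dual total-domination conditions: for every $J\in\mathcal{A}$ the set $\{y_j:j\in J\}$ totally dominates $H_n$, so $D$ dominates every $H_n$-layer over a vertex of $A_J$; and for every $K\in\mathcal{B}$ the set $\bigcup_{k\in K}A_k$ totally dominates $G_n$, so $D$ dominates every $G_n$-layer over a vertex of $B_K$. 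The domination of the finite ``core $\times$ core'' part $X\times\{y_1,\dots,y_6\}$ I would check by a direct, bookkeeping-style case analysis that is independent of $n$.

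The step I expect to be the main obstacle is the simultaneous satisfaction of all the combinatorial constraints on $[6]$: the clique/non-edge complementarity forcing $D$ to be independent, the requirement that each $J\in\mathcal{A}$ (resp.\ each $K\in\mathcal{B}$) totally dominate the opposite core so that $D$ dominates the padding layers, and the requirement that every core maximal independent set be ``missed'' by some padding set so that $i(G_n)=i(H_n)=n+2$. These conditions pull in opposite directions, and pinning down explicit systems $\mathcal{A},\mathcal{B}$ together with an explicit six-vertex edge set meeting all of them at once is the genuinely delicate part; once the systems are fixed, every remaining verification is a finite check whose outcome does not depend on $n$, and the final inequality $i(G_n\times H_n)\le|D|=12<n+2$ follows immediately.
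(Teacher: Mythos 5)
Your plan reproduces, almost clause for clause, the architecture of the paper's refutation of item~2 (its Theorem~\ref{thm:extremecounterexample}): a six-element index set, padding sets of size $n$ attached according to set systems ${\cal A},{\cal B}\subseteq 2^{[6]}$, the independent dominating set $D=\bigcup_{s=1}^6\{(u_s,y_s),(v_s,y_s)\}$ of size $12$, independence via the ``clique pairs of $G_n$ $=$ non-edges among $y_1,\ldots,y_6$'' complementarity, and domination of the padding layers via the two dual total-domination conditions. All of the sufficient conditions you isolate are exactly the ones the paper uses, and they are correct.

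The genuine gap is that you never produce the witness. The theorem is an existence statement, and under your reduction its entire content becomes the simultaneous satisfiability of the three interlocking conditions on $[6]$ (complementarity, total domination of the opposite core by each $J\in{\cal A}$ and each $K\in{\cal B}$, and the ``every core maximal independent set misses some padding set'' condition forcing $i=n+2$). You explicitly defer this -- ``pinning down explicit systems \ldots is the genuinely delicate part'' -- so the proposal, as written, proves nothing until that design problem is solved; a priori these constraints could be contradictory (indeed, the complementarity condition pulls the $G_n$-cliques and the $H_n$-edges in opposite directions, which is exactly why satisfiability is not obvious). The paper closes this gap by exhibiting concrete systems and verifying them: ${\cal A}=\{\{3,4,5,6\},\{2,5,6\},\{1,2,3,4\},\{1,3,4,6\},\{1,2,5\}\}$, ${\cal B}$ a family of seven $4$-subsets of $[6]$, the seven clique pairs $A_1\cup A_5$, $A_1\cup A_6$, $A_2\cup A_3$, $A_2\cup A_4$, $A_2\cup A_6$, $A_3\cup A_5$, $A_4\cup A_5$, and the eight edges $y_1y_2,y_1y_3,y_1y_4,y_2y_5,y_3y_6,y_3y_4,y_4y_6,y_5y_6$ (whose non-edges are precisely the seven clique pairs, and whose seven core maximal independent sets are each ``missed'' by one member of ${\cal B}$). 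A secondary, smaller soft spot: your lower-bound step asserts that a maximal independent set of $G_n$ restricts to a maximal independent set of the core $G_n[X]$; this is not automatic (a core vertex could be dominated only through a padding vertex of $I$), and a complete argument must handle the case where $I$ already contains a full padding set -- though the paper itself is equally terse on this point.
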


Bre\v{s}ar proved that $\Gamma(G\Box H) \ge \Gamma(G)\Gamma(H)$ in \cite{b-2005} and Bre\v{s}ar, Klav\v{z}ar, and Rall proved that $\Gamma(G\times H) \ge \Gamma(G)\Gamma(H)$ in \cite{bkr-2007}. It is still unknown whether $ir(G\Box H) \ge ir(G)ir(H)$ ($ir$ denotes the lower irredundance number), and Vizing's conjecture remains unsettled. In this paper, we proved that there exist pairs of graphs for which $i(G\times H) < \min\{i(G), i(H)\}$. We also studied the behavior of $i(G\times K_n)$ for a general graph $G$ and were able to provide the exact values for $i(G\times K_n)$ when $G \in \{P_m, C_m\}$.

Consider the following computational problem.
\begin{center}
\fbox{\parbox{0.85\linewidth}{\noindent
{\sc Independent Domination of Direct Products}\\[.8ex]
\begin{tabular*}{.93\textwidth}{rl}
{\em Input:} & A graph $G$, a positive integer $n \geq 3$ and an integer $k$.\\
{\em Question:} & Is $i(G \times K_n) \leq k$?
\end{tabular*}
}}
\end{center}

As presented in Section~\ref{sec:productwithcomplete}, showing that $i(G \times K_n) \leq k$ is equivalent to finding a weak partition $V_0,V_1,\ldots,V_n,V_{[n]}$  of $V(G)$
that satisfies the four conditions necessary to construct an independent dominating set such that the weight is at most $k$.  We pose the following problem.

\begin{prob}
Determine the complexity of {\sc Independent Domination of Direct Products}
\end{prob}

\end{document}